\newtheorem{theorem}{Theorem}
\theoremstyle{definition}
\theoremstyle{definition}
\newtheorem*{remark}{Remark}
\begin{document}
	\title[A family of weighted Erd\H{o}s-Mordell inequality and applications]{A family of weighted Erd\H{o}s-Mordell inequality and applications}
	
	\author[Tran Quang Hung]{Tran Quang Hung}
	\address{High School for Gifted Students, Hanoi, Vietnam}
	\email{tranquanghung@hus.edu.vn}
	
	\keywords{Geometric inequality, Erd\H{o}s-Mordell inequality, weighted inequality}
	\subjclass[2010]{51M04, 51M16}
	\maketitle

\begin{abstract}We establish some new generalizations of Erd\H{o}s-Mordell inequality by adding weights to its terms. Using these generalizations, we derived strengthened versions of the original Erd\H{o}s-Mordell inequality. We also found two other variations of Erd\H{o}s-Mordell inequality to be generalizable in the same way.
\end{abstract}

\section{Introduction} Throughout this article we denote the distance between two points $X$ and $Y$ simply by $XY$. Let $P$ be an interior point of the triangle $ABC$. Then
\begin{equation*}PA+PB+PC\ge 2(d_a+d_b+d_c),\end{equation*}
where $d_a$, $d_b$, and $d_c$ are the distances from $P$ to the sides $BC$, $CA$, $AB$, respectively, and equality holds if and only if triangle $ABC$ is equilateral and $P$ is its center.

\begin{figure}[htbp]
	\begin{center}\scalebox{0.7}{\includegraphics{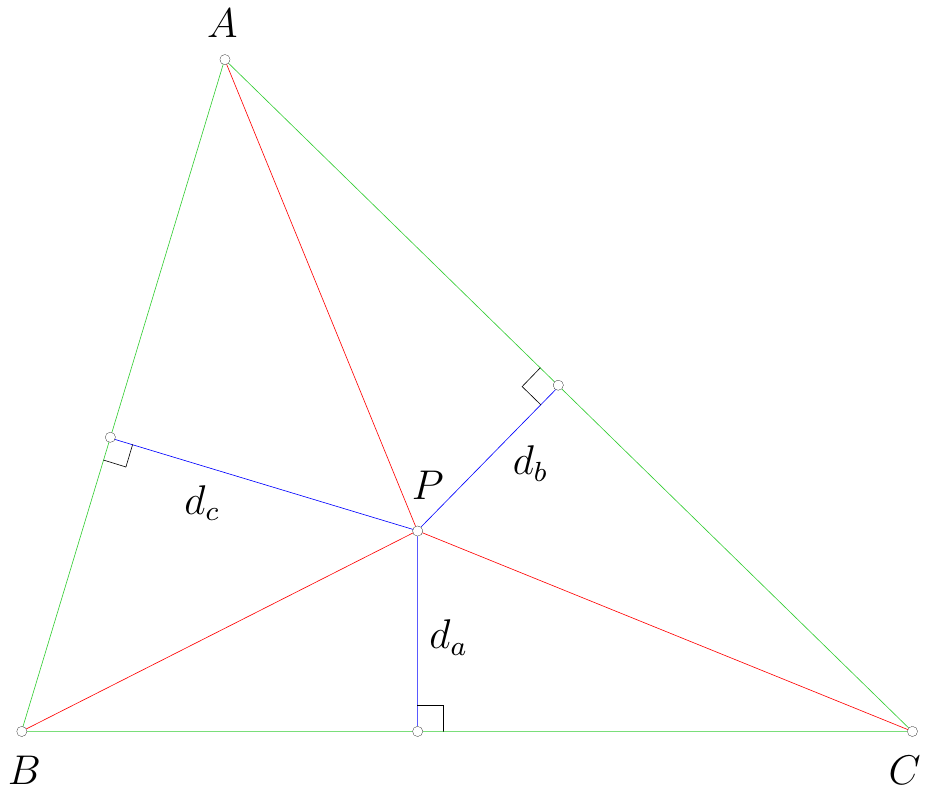}}\end{center}
	\caption{Erd\H{o}s-Mordell inequality.}
	\label{fig1}
\end{figure}

This is the well-known and elegant Erd\H{o}s–Mordell inequality. It was named after Erd\H{o}s, who conjectured it in 1935 \cite{1}, and Mordell, who was the first to prove it in 1937 \cite{2}. Throughout the history of elementary geometry, there has been great deals of proofs, developments and generalizations to this classical result, see \cite{2,3,11,12,13,14,15,16,18,19,20a,20}.

In this article, following the ideas of generalizing Erd\H{o}s–Mordell inequality using weights from Dar and Gueron \cite{3} and Liu \cite{20a}, we developed a new generalization using a different type of weight. As a result, we discovered new weighted generalizations together with strengthened versions of the original Erd\H{o}s–Mordell inequality. In addition, the new weighting method is able to generalize two other variations of the original Erd\H{o}s–Mordell inequality. The first one is Barrow's inequality \cite{2}:
\begin{equation*}PA+PB+PC\ge 2(l_a+l_b+l_c),\end{equation*}
where $l_a$, $l_b$, and $l_c$ are the lengths of the bisectors of $\angle BPC$, $\angle CPA$, and $\angle APB$, respectively, and equality holds if and only if triangle $ABC$ is equilateral and $P$ is its center. The second variation is from Dao, Nguyen, and Pham \cite{15}:
\begin{equation*}R_A+R_B+R_C\ge 2(d_a+d_b+d_c),\end{equation*}
where $R_A$, $R_B$, and $R_C$ are the distances of $P$ from the tangents to circumcircle of triangle $ABC$ at $A$, $B$, and $C$, respectively, and equality holds if and only if triangle $ABC$ is equilateral and $P$ is its center.

We present the new weighted generalizations with proofs in Section 2 and the resulting some applications and strengthened inequalities will be given in Section 3.

\section{Main theorems and proofs} We begin with a new weighted version of Erd\H{o}s-Mordell inequality:

\begin{theorem}[A new weighted version of Erd\H{o}s-Mordell inequality]\label{thm0}Let $x$, $y$, $z$, $u$, $v$, and $w$ be positive real numbers such that $xyz=uvw=1$. Let $P$ be an interior point of the triangle $ABC$. Denote by $d_a$, $d_b$, and $d_c$ the distances of $P$ from the sides $AB$, $BC$, and $CA$, respectively. Then
	\begin{equation}\label{eq0}x(PA+u^3d_a)+y(PB+v^3d_b)+z(PC+w^3d_c)\ge 3(ud_a+vd_b+wd_c).
	\end{equation}
	Equality holds if and only if $x=y=z=u=v=w=1$, triangle $ABC$ is equilateral and $P$ is its center.
\end{theorem}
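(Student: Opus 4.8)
The plan is to deduce \eqref{eq0} from the classical pointwise estimate that underlies every proof of the Erd\H{o}s--Mordell inequality, followed by one application of the AM--GM inequality for each of $d_a$, $d_b$, $d_c$. The hypothesis $xyz=1$ is precisely what makes those three AM--GM steps close up with the constant $3$ and the first powers $u,v,w$ on the right.

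First I would record the auxiliary estimate. Write $a=BC$, $b=CA$, $c=AB$, and let $d_a,d_b,d_c$ be the distances from $P$ to $BC,CA,AB$, as in the introduction. Dropping the perpendiculars from $P$ to the two sides through $A$, with feet $E\in CA$ and $F\in AB$, the quadrilateral $AEPF$ is cyclic with diameter $PA$, so the chord $EF$ equals $PA\sin A$; the law of cosines in $\triangle EPF$ (where $\angle EPF=\pi-A$) gives $PA^2\sin^2A=d_b^2+d_c^2+2d_bd_c\cos A$, and since $d_b^2+d_c^2+2d_bd_c\cos A-(d_b\sin C+d_c\sin B)^2=(d_b\cos C-d_c\cos B)^2\ge0$, this yields $PA\ge\frac{c}{a}d_b+\frac{b}{a}d_c$, together with its cyclic analogues
\[ PB\ \ge\ \tfrac{c}{b}\,d_a+\tfrac{a}{b}\,d_c,\qquad PC\ \ge\ \tfrac{b}{c}\,d_a+\tfrac{a}{c}\,d_b. \]
(If the paper records this estimate separately I would simply cite it.)

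Next I would multiply these three estimates by $x,y,z$, add them, and add $xu^3d_a+yv^3d_b+zw^3d_c$ to both sides, so that the left-hand side of \eqref{eq0} is at least
\[ \bigl(y\tfrac{c}{b}+z\tfrac{b}{c}+xu^3\bigr)d_a+\bigl(x\tfrac{c}{a}+z\tfrac{a}{c}+yv^3\bigr)d_b+\bigl(x\tfrac{b}{a}+y\tfrac{a}{b}+zw^3\bigr)d_c. \]
In the first bracket the three summands have product $\tfrac{c}{b}\cdot\tfrac{b}{c}\cdot xyz\cdot u^3=u^3$, so AM--GM gives $y\tfrac{c}{b}+z\tfrac{b}{c}+xu^3\ge3u$; in the same way the other two brackets are at least $3v$ and $3w$. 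Adding the three resulting inequalities gives \eqref{eq0}. That is the whole argument for the inequality, and I expect every step of it to be routine: the single point is the choice of grouping that makes the product of each bracket equal $u^3$, $v^3$, $w^3$, which is where $xyz=1$ enters.

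For the equality statement I would trace both sources of slack. Equality in \eqref{eq0} forces equality in each AM--GM, i.e. $y\tfrac{c}{b}=z\tfrac{b}{c}=xu^3$, $x\tfrac{c}{a}=z\tfrac{a}{c}=yv^3$, $x\tfrac{b}{a}=y\tfrac{a}{b}=zw^3$, and equality in the auxiliary estimate at each vertex, i.e. $d_b\cos C=d_c\cos B$, $d_c\cos A=d_a\cos C$, $d_a\cos B=d_b\cos A$. From the first system one reads off $x:y:z=a^2:b^2:c^2$ and then $u,v,w$ as explicit functions of $a,b,c$; combining this with $xyz=uvw=1$ and with the relations from the second system identifies the equality configuration. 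I expect this bookkeeping, rather than the inequality itself, to be the part that needs care.
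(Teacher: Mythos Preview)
Your argument is essentially the same as the paper's: invoke the classical estimate $PA\ge\frac{b}{a}d_c+\frac{c}{a}d_b$ (the paper simply quotes it from Dar--Gueron rather than rederiving it via the pedal-circle computation you give), multiply the three instances by $x,y,z$, regroup the right-hand side by $d_a,d_b,d_c$, and apply AM--GM to each three-term bracket using $xyz=1$. The paper's handling of the equality case is just as brief as your sketch.
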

\begin{proof}With standard notations $BC=a$, $CA=b$, and $AB=c$, we recall a result in \cite{3}:
	\begin{equation}\label{eq3}PA\ge \frac{b\cdot d_c+c\cdot d_b}{a}.\end{equation}
	Equality in \eqref{eq3} holds if and only if $P$ lies on the line connecting $A$ to the circumcenter of $ABC$.
	Since $x$ is a positive real number,
	\begin{equation}\label{eq4}x(PA+u^3d_a)\ge x\left(\frac{b\cdot d_c+c\cdot d_b}{a}+u^3d_a\right).\end{equation}
	Similarly, with positive real numbers $y$ and $z$, we get
	\begin{equation}\label{eq5}y(PB+v^3d_b)\ge y\left(\frac{c\cdot d_a+a\cdot d_c}{b}+v^3d_b\right)\end{equation}
	and
	\begin{equation}\label{eq6}z(PC+w^3d_c)\ge z\left(\frac{a\cdot d_b+b\cdot d_a}{c}+w^3d_c\right)\end{equation}
	with analogous conditions for equality. Now summing the inequalities \eqref{eq4}, \eqref{eq5}, and \eqref{eq6}, we obtain
	\begin{multline}\label{eq7}x(PA+u^3d_a)+y(PB+v^3d_b)+z(PC+w^3d_c)\ge \left(y\cdot\frac{c}{b}+z\cdot\frac{b}{c}+xu^3\right)d_a+\\
	+\left(z\cdot\frac{b}{a}+x\cdot\frac{c}{a}+yv^3\right)d_b
	+\left(x\cdot\frac{a}{b}+y\cdot\frac{a}{b}+zw^3\right)d_c.
	\end{multline}
	and equality holds if and only if $P$ is circumcenter of $\triangle ABC$.
	
	Using the arithmetic mean-geometric mean inequality for three positive numbers, with notice that $xyz=1$, we have
	\begin{equation}\label{eq8}y\cdot\frac{c}{b}+z\cdot\frac{b}{c}+xu^3\ge 3\sqrt[3]{y\cdot\frac{c}{b}\cdot z\cdot\frac{b}{c}\cdot xu^3}=3u.
	\end{equation}
	with equality holds if and only if
	\begin{equation}\label{eq9}y\cdot\frac{c}{b}=z\cdot\frac{b}{c}=xu^3,
	\end{equation}
	which is equivalent to
	\begin{equation}\label{eq10}u^3\cdot\frac{x}{y}=\frac{c}{b}\quad\text{and}\quad u^3\cdot\frac{x}{z}=\frac{b}{c}.
	\end{equation}
	Similarly,
	\begin{equation}\label{eq11}z\cdot\frac{b}{a}+x\cdot\frac{c}{a}+yv^3\ge 3v,
	\end{equation}
	and
	\begin{equation}\label{eq12}x\cdot\frac{a}{b}+y\cdot\frac{a}{b}+zw^3\ge 3w
	\end{equation}
	Since $uvw=1$, the equalities hold if and only if
	\begin{equation}\label{eq14}x=y=z=u=v=w=1\quad\text{and}\quad a=b=c.
	\end{equation}
	Finally, combining \eqref{eq7}, \eqref{eq8}, \eqref{eq11}, and \eqref{eq12}, we get
	\begin{equation}\label{eq15}x(PA+u^3d_a)+y(PB+v^3d_b)+z(PC+w^3d_c)\ge 3(ud_a+vd_b+wd_c)
	\end{equation}
	with equality holds if and only if
	\begin{equation}\label{eq16}x=y=z=u=v=w=1\quad\text{and}\quad a=b=c.
	\end{equation}
	This completes the proof of Theorem \ref{thm0}.
\end{proof}
\begin{remark}In Theorem \ref{thm0}, where $u=v=w=1$, we obtain
	\begin{equation}x(PA+d_a)+y(PB+d_b)+z(PC+d_c)\ge 3(d_a+d_b+d_c).
	\end{equation}
	In Theorem \ref{thm0}, where $x=y=z=1$, we obtain
	\begin{equation}PA+PB+PC\ge (3u-u^3)d_a+(3v-v^3)d_b+(3w-w^3)d_c.
	\end{equation}
	Theses inequalities can be considered as other weighted versions of Erd\H{o}s-Mordell inequality.
\end{remark}

Similarly, we get a new weighted Dao-Nguyen-Pham's inequality in \cite{15}.

\begin{theorem}[A new weighted version of Dao-Nguyen-Pham's inequality]\label{thma0}Let $x$, $y$, $z$, $u$, $v$, and $w$ be positive real numbers such that $xyz=uvw=1$. Let $P$ be an interior point of the triangle $ABC$ with circumcircle $(\omega)$. Denote by $R_A$, $R_B$, and $R_C$ the distances of $P$ from the tangents to $(\omega)$ at $A$, $B$, $C$ respectively, $d_a$, $d_b$, and $d_c$ the distances of $P$ from the sides $AB$, $BC$, and $CA$, respectively. Then
	\begin{equation}\label{eqa0}x(R_A+u^3d_a)+y(R_B+v^3d_b)+z(R_C+w^3d_c)\ge 3(u d_a+v d_b+w d_c).
	\end{equation}
	Equality holds if and only if $x=y=z=u=v=w=1$, triangle $ABC$ is equilateral and $P$ is its center.
\end{theorem}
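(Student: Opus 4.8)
The plan is to reuse the proof of Theorem~\ref{thm0} essentially verbatim; the one genuinely new ingredient is the correct analogue of the Dar--Gueron estimate \eqref{eq3} with $PA$ replaced by the tangent-distance $R_A$. The crucial point is that this analogue is not an inequality but an \emph{identity}: the distance from $P$ to the tangent at $A$ is exactly the quantity that bounds $PA$ from below in \eqref{eq3}, and cyclically for $B$ and $C$,
\[ R_A=\frac{b\cdot d_c+c\cdot d_b}{a},\qquad R_B=\frac{c\cdot d_a+a\cdot d_c}{b},\qquad R_C=\frac{a\cdot d_b+b\cdot d_a}{c}. \]
As a plausibility check, summing these three identities and using $\frac b a+\frac a b\ge2$ and its cyclic versions recovers the Dao--Nguyen--Pham inequality $R_A+R_B+R_C\ge 2(d_a+d_b+d_c)$ of \cite{15}.

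To establish the identity I would argue as follows. The tangent $t_A$ is the line through $A$ perpendicular to $OA$, where $O$ is the circumcenter, and $P$ lies on the same side of $t_A$ as the disk bounded by $(\omega)$, so the distance from $P$ to $t_A$ equals the length of the orthogonal projection of the segment $AP$ onto the line $AO$. Write the position of $P$ at the vertex $A$ as a non-negative combination of the unit vectors along $AB$ and $AC$; the coefficients are governed by the distances from $P$ to $AB$ and to $AC$, while the projections of those two unit vectors onto $AO$ equal $\sin C$ and $\sin B$ respectively — equivalently, the tangent--chord angles that $t_A$ makes with $AB$ and $AC$ are $C$ and $B$. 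A short trigonometric computation then gives
\[ R_A=\frac{\sin C}{\sin A}\cdot(\text{distance from }P\text{ to }AC)+\frac{\sin B}{\sin A}\cdot(\text{distance from }P\text{ to }AB), \]
which by the law of sines is precisely the right-hand side of \eqref{eq3}; the formulas for $R_B$ and $R_C$ follow by relabeling, and the same conclusion comes out of a coordinate computation with $A$ at the origin, uniformly for acute, right and obtuse triangles.

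Granting the identities, the rest is a transcription of \eqref{eq4}--\eqref{eq16}: multiplying the three identities by $x$, $y$, $z$ and adding $xu^3 d_a$, $yv^3 d_b$, $zw^3 d_c$ reproduces \eqref{eq7} exactly (now with equality in the first step), and applying the arithmetic mean--geometric mean inequality to the three coefficients $y\frac c b+z\frac b c+xu^3$, $x\frac c a+z\frac a c+yv^3$, $x\frac b a+y\frac a b+zw^3$, together with $xyz=uvw=1$, finishes the proof of \eqref{eqa0} just as in \eqref{eq8}--\eqref{eq15}. I expect the only real obstacle to be pinning down the tangent-distance identity correctly, sign conventions included; once it is in hand, nothing new happens. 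One point to watch in the equality discussion: since the analogue of \eqref{eq3} is an equality, no ``$P$ on line $AO$''-type condition is imposed, so equality in \eqref{eqa0} holds as soon as $x=y=z=u=v=w=1$ and $a=b=c$, for any interior point $P$ — so the clause ``$P$ is its center'' in the statement, while consistent, is not actually forced.
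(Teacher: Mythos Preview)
Your approach is essentially identical to the paper's: it too invokes the identity $R_A=\frac{b\,d_c+c\,d_b}{a}$ from \cite{15} (without reproving it) and then runs the same AM--GM argument as in Theorem~\ref{thm0}, now with equality in the first step. Your closing observation is correct and worth flagging: because \eqref{eq3a} is an identity, equality in \eqref{eqa0} indeed holds for \emph{every} interior $P$ once $x=y=z=u=v=w=1$ and $a=b=c$, so the ``$P$ is its center'' clause in the statement is not forced---the paper's own proof in fact concludes only with \eqref{eq16a}.
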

\begin{proof}With standard notations $BC=a$, $CA=b$, and $AB=c$, we recall a result of the second proof in \cite{15}:
	\begin{equation}\label{eq3a}R_A=\frac{b\cdot d_c+c\cdot d_b}{a}.\end{equation}
	Since $x$ is a positive real number, 
	\begin{equation}\label{eq4a}x(R_A+u^3d_a)= x\left(\frac{b\cdot d_c+c\cdot d_b}{a}+u^3d_a\right).\end{equation}
	Similarly, with positive real numbers $y$ and $z$, we get
	\begin{equation}\label{eq5a}y(R_B+v^3d_b)= y\left(\frac{c\cdot d_a+a\cdot d_c}{b}+v^3d_b\right)\end{equation}
	and
	\begin{equation}\label{eq6a}z(R_C+w^3d_c)= z\left(\frac{a\cdot d_b+b\cdot d_a}{c}+w^3d_c\right)\end{equation}
	Now summing the inequalities \eqref{eq4a}, \eqref{eq5a}, and \eqref{eq6a}, we obtain the identity
	\begin{multline}\label{eq7a}x(R_A+u^3d_a)+y(R_B+v^3d_b)+z(R_C+w^3d_c)= \left(y\cdot\frac{c}{b}+z\cdot\frac{b}{c}+xu^3\right)d_a+\\
	+\left(z\cdot\frac{b}{a}+x\cdot\frac{c}{a}+yv^3\right)d_b
	+\left(x\cdot\frac{a}{b}+y\cdot\frac{a}{b}+zw^3\right)d_c.
	\end{multline}
	Using the arithmetic mean-geometric mean inequality for three positive numbers, with notice that $xyz=1$, we have
	\begin{equation}\label{eq8a}y\cdot\frac{c}{b}+z\cdot\frac{b}{c}+xu^3\ge 3\sqrt[3]{y\cdot\frac{c}{b}\cdot z\cdot\frac{b}{c}\cdot xu^3}=3u.
	\end{equation}
	with equality holds if and only if
	\begin{equation}\label{eq9a}y\cdot\frac{c}{b}=z\cdot\frac{b}{c}=xu^3,
	\end{equation}
	which is equivalent to
	\begin{equation}\label{eq10a}u^3\cdot\frac{x}{y}=\frac{c}{b}\quad\text{and}\quad u^3\cdot\frac{x}{z}=\frac{b}{c}.
	\end{equation}
	Similarly,
	\begin{equation}\label{eq11a}z\cdot\frac{b}{a}+x\cdot\frac{c}{a}+yv^3\ge 3v,
	\end{equation}
	and
	\begin{equation}\label{eq12a}x\cdot\frac{a}{b}+y\cdot\frac{a}{b}+zw^3\ge 3w
	\end{equation}
	Since $uvw=1$, the equalities hold if and only if
	\begin{equation}\label{eq14a}x=y=z=u=v=w=1\quad\text{and}\quad a=b=c.
	\end{equation}
	Finally, combining \eqref{eq7a}, \eqref{eq8a}, \eqref{eq11a}, and \eqref{eq12a}, we get
	\begin{equation}\label{eq15a}x(R_A+u^3d_a)+y(R_B+v^3d_b)+z(R_C+w^3d_c)\ge 3(u d_a+v d_b+wd_c)
	\end{equation}
	with equality holds if and only if
	\begin{equation}\label{eq16a}x=y=z=u=v=w=1\quad\text{and}\quad a=b=c.
	\end{equation}
	This completes the proof of Theorem \ref{thma0}.
\end{proof}
\begin{remark}Since $R_A\le PA$, $R_b\le PB$, and $R_c\le PC$, Theorem \ref{thma0} can be considered as a strengthened version of Theorem \ref{thm0}. In Theorem \ref{thma0}, let $u=v=w=1$ or $x=y=z=1$, we obtain two weighted inequalities
	\begin{equation}x(R_A+d_a)+y(R_B+d_b)+z(R_C+d_c)\ge 3(d_a+d_b+d_c),
	\end{equation}
	and
	\begin{equation}R_A+R_B+R_C\ge (3u-u^3)d_a+(3v-v^3)d_b+(3w-w^3)d_c.
	\end{equation}
	They are considered as other weighted versions of Dao-Nguyen-Pham's inequality.
\end{remark}
By the same way, we have a generalization for Barrow's inequality. Similar to Barow's proof, we will use Wolstenholme's inequality \cite{4} to prove this generalization.
\begin{theorem}[A new weighted version of Barrow's inequality]\label{thm1}Let $x$, $y$, $z$, $u$, $v$, and $w$ be positive real numbers such that $xyz=uvw=1$. Let $P$ be an interior point of the triangle $ABC$. Denote by $l_a$, $l_b$, and $l_c$ the lengths of the bisectors of $\angle BPC$, $\angle CPA$, and $\angle APB$, respectively. Then
	\begin{equation}\label{eq1}x(PA+u^3l_a)+y(PB+v^3l_b)+z(PC+w^3l_c)\ge (3ul_a+vl_b+wl_c).
	\end{equation}
	Equality holds if and only if $x=y=z=u=v=w=1$, triangle $ABC$ is equilateral and $P$ is its center.
\end{theorem}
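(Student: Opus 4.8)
The plan is to run the same argument as in the proof of Theorem~\ref{thm0}, with Wolstenholme's inequality taking over the structural role that the Dar--Gueron estimate~\eqref{eq3} played there. Write $\angle BPC=2\alpha$, $\angle CPA=2\beta$, $\angle APB=2\gamma$; since $P$ lies inside $ABC$ we have $\alpha,\beta,\gamma\in(0,\pi/2)$ and $\alpha+\beta+\gamma=\pi$. The length of the internal bisector from $P$ in triangle $BPC$ equals $\dfrac{2\,PB\cdot PC}{PB+PC}\cos\alpha$, so, since the harmonic mean of two positive numbers does not exceed their geometric mean,
\[
l_a\le\sqrt{PB\cdot PC}\,\cos\alpha,\qquad l_b\le\sqrt{PC\cdot PA}\,\cos\beta,\qquad l_c\le\sqrt{PA\cdot PB}\,\cos\gamma ,
\]
each with equality exactly when the two relevant distances from $P$ coincide. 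The second ingredient is Wolstenholme's inequality \cite{4}: for all real $p,q,r$ and all $\alpha+\beta+\gamma=\pi$,
\[
p^{2}+q^{2}+r^{2}\ge 2qr\cos\alpha+2rp\cos\beta+2pq\cos\gamma ,
\]
with equality iff $p:q:r=\sin\alpha:\sin\beta:\sin\gamma$.

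I would first record two preliminary facts. Applying Wolstenholme with $p=\sqrt{x\cdot PA}$, $q=\sqrt{y\cdot PB}$, $r=\sqrt{z\cdot PC}$ and using $xyz=1$ (so that $\sqrt{yz}=x^{-1/2}$, etc.) yields a \emph{weighted Wolstenholme inequality}
\[
x\cdot PA+y\cdot PB+z\cdot PC\ge 2\sqrt{yz}\,\sqrt{PB\cdot PC}\,\cos\alpha+2\sqrt{zx}\,\sqrt{PC\cdot PA}\,\cos\beta+2\sqrt{xy}\,\sqrt{PA\cdot PB}\,\cos\gamma .
\]
Second, for positive $x,u$, putting $t=u\sqrt{x}$, the bound $3u-xu^{3}\le 2x^{-1/2}=2\sqrt{yz}$ is, after multiplying through by $\sqrt{x}$, just $3t-t^{3}\le 2$, i.e.\ $(t-1)^{2}(t+2)\ge 0$; equality holds iff $t=1$, i.e.\ $xu^{2}=1$. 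The analogous statements hold for the pairs $(y,v)$ and $(z,w)$.

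Now I would rewrite \eqref{eq1} as $F\ge 0$, where
\[
F=x\cdot PA+y\cdot PB+z\cdot PC+(xu^{3}-3u)\,l_a+(yv^{3}-3v)\,l_b+(zw^{3}-3w)\,l_c ,
\]
and bound each bracketed term from below. If $xu^{3}-3u\ge 0$ the $l_a$-term is nonnegative; if $xu^{3}-3u<0$, then since $0<l_a\le\sqrt{PB\cdot PC}\,\cos\alpha$ and $3u-xu^{3}\le 2\sqrt{yz}$,
\[
(xu^{3}-3u)\,l_a\ \ge\ (xu^{3}-3u)\sqrt{PB\cdot PC}\,\cos\alpha\ \ge\ -\,2\sqrt{yz}\,\sqrt{PB\cdot PC}\,\cos\alpha .
\]
In either case $(xu^{3}-3u)l_a\ge -2\sqrt{yz}\,\sqrt{PB\cdot PC}\,\cos\alpha$, and likewise cyclically. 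Summing the three bounds and then applying the weighted Wolstenholme inequality gives $F\ge 0$, which is \eqref{eq1}. For the equality discussion one reverses the chain: equality forces $xu^{2}=yv^{2}=zw^{2}=1$ and (from the bisector bounds) $PA=PB=PC$, after which equality in the weighted Wolstenholme inequality forces $\sqrt{x}:\sqrt{y}:\sqrt{z}=\sin\alpha:\sin\beta:\sin\gamma$. I would scrutinise this last reduction carefully: the Wolstenholme equality condition is considerably less rigid than the AM--GM condition used in Theorem~\ref{thm0}, so it is worth checking whether configurations with $P$ the circumcentre and suitably tuned weights also realise equality, rather than only the equilateral/centre/all-weights-$1$ case.

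The step I expect to cause the most trouble is precisely that bookkeeping: one has to match the sign case-split on $xu^{3}-3u$ with the correct orientation of the bisector estimate, and one must ensure the quantity $2\sqrt{yz}=2x^{-1/2}$ coming out of the scalar inequality agrees verbatim with the coefficient produced by the rescaling in the weighted Wolstenholme inequality — both reductions rely on $xyz=1$, which is exactly what makes them coincide. Once the substitutions are in place, the weighted Wolstenholme inequality and the scalar inequality are themselves routine.
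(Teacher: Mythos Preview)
Your argument for the inequality is correct and, once unwound, coincides with the paper's. The paper keeps the exact bisector identity
\[
\Bigl(\sqrt{\tfrac{PB}{PC}}+\sqrt{\tfrac{PC}{PB}}\Bigr)l_a=2\sqrt{PB\cdot PC}\,\cos\alpha
\]
and applies Wolstenholme to obtain \eqref{eq25}, then uses a single three-term AM--GM on the coefficient $\sqrt{yz}\sqrt{PB/PC}+\sqrt{yz}\sqrt{PC/PB}+xu^{3}\ge 3u$. Since $\bigl(\sqrt{PB/PC}+\sqrt{PC/PB}\bigr)l_a=2\sqrt{PB\cdot PC}\cos\alpha$, that AM--GM step is exactly the bound $(xu^{3}-3u)l_a\ge-2\sqrt{yz}\sqrt{PB\cdot PC}\cos\alpha$ that you reach via the pair ``$l_a\le\sqrt{PB\cdot PC}\cos\alpha$'' and ``$3u-xu^{3}\le 2x^{-1/2}$''. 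So the two proofs share the same skeleton (Wolstenholme plus this coefficient bound); you have merely factored the three-term AM--GM into a two-term AM--GM and the scalar cubic $(t-1)^{2}(t+2)\ge 0$, at the price of the sign case-split. The paper's packaging avoids that split, which is a small advantage.

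Your caution about the equality clause is well placed, and in fact the stated ``if and only if'' is too strong --- in the paper's proof as well as in yours. Tracing equality through either argument gives $PA=PB=PC$ together with $xu^{2}=yv^{2}=zw^{2}=1$, and then Wolstenholme forces $\sqrt{x}:\sqrt{y}:\sqrt{z}=\sin\alpha:\sin\beta:\sin\gamma$. Taking $P$ to be the circumcentre of an acute triangle one has $\alpha=A$, $\beta=B$, $\gamma=C$, and the choice $u:v:w=\csc A:\csc B:\csc C$ (normalised so that $uvw=1$), $x=u^{-2}$, $y=v^{-2}$, $z=w^{-2}$ satisfies all of these conditions and yields equality in \eqref{eq1}; a direct check confirms this using $\sum\sin^{2}A=2\sum\sin A\sin B\cos C$. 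So there is a two-parameter family of equality configurations beyond the equilateral/centre/all-ones case, and you should not try to squeeze your equality analysis down to the statement as written.
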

\begin{proof}Let $\angle BPC=\alpha$, $\angle CPA=\beta$, angle $APB=\gamma$, using formula of bisector, we have
	\begin{equation}\label{eq19}l_a=\frac{PB\cdot PC\cos\frac{\alpha}{2}}{PB+PC},\end{equation}
	this is equivalent to
	\begin{equation}\label{eq20}2\sqrt{PB\cdot PC}\cos\frac{\alpha}{2}=\left(\sqrt{\frac{PB}{PC}}+\sqrt{\frac{PC}{PB}}\right)l_a.\end{equation}
	Multiplying both sides of \eqref{eq20} with $\sqrt{yz}$, we have
	\begin{equation}\label{eq21}2\sqrt{yPB\cdot zPC}\cos\frac{\alpha}{2}=\sqrt{yz}\left(\sqrt{\frac{PB}{PC}}+\sqrt{\frac{PC}{PB}}\right)l_a,\end{equation}
	and similarly,
	\begin{equation}\label{eq22}2\sqrt{zPC\cdot xPA}\cos\frac{\beta}{2}=\sqrt{zx}\left(\sqrt{\frac{PC}{PA}}+\sqrt{\frac{PA}{PC}}\right)l_b,\end{equation}
	\begin{equation}\label{eq23}2\sqrt{xPA\cdot yPB}\cos\frac{\gamma}{2}=\sqrt{xy}\left(\sqrt{\frac{PA}{PB}}+\sqrt{\frac{PB}{PA}}\right)l_c.\end{equation}
	Since $\frac{\alpha}{2}+\frac{\beta}{2}+\frac{\gamma}{2}=\pi$, using Wolstenholme's inequality \cite{4}, we get
	\begin{multline}\label{eq24}2\sqrt{PB\cdot PC}\cos\frac{\alpha}{2}+2\sqrt{zPC\cdot xPA}\cos\frac{\beta}{2}+\\+2\sqrt{xPA\cdot yPB}\cos\frac{\gamma}{2}\le xPA+yPB+zPC.\end{multline}
	From \eqref{eq21}, \eqref{eq22}, \eqref{eq23}, and \eqref{eq24}, we obtain
	\begin{multline}\label{eq25}xPA+yPB+zPC\ge \sqrt{yz}\left(\sqrt{\frac{PB}{PC}}+\sqrt{\frac{PC}{PB}}\right)l_a+\\+\sqrt{zx}\left(\sqrt{\frac{PC}{PA}}+\sqrt{\frac{PA}{PC}}\right)l_b+\sqrt{xy}\left(\sqrt{\frac{PA}{PB}}+\sqrt{\frac{PB}{PA}}\right)l_c.
	\end{multline}
	Adding two sides of \eqref{eq25} with $xu^3l_a+yv^3l_b+zw^3l_c$, we get
	\begin{multline}\label{eq26}x(PA+u^3l_a)+y(PB+v^3l_b)+z(PC+w^3l_c)\ge\\ \left(\sqrt{yz}\sqrt{\frac{PB}{PC}}+\sqrt{yz}\sqrt{\frac{PC}{PB}}+xu^3\right)l_a+\left(\sqrt{zx}\sqrt{\frac{PC}{PA}}+\sqrt{zx}\sqrt{\frac{PA}{PC}}+yv^3\right)l_b+\\ +\left(\sqrt{xy}\sqrt{\frac{PA}{PB}}+\sqrt{xy}\sqrt{\frac{PB}{PA}}+zw^3\right)l_c.
	\end{multline}
	Since $xyz=1$, using arithmetic mean-geometric mean inequality for three positive numbers, we have
	\begin{equation}\label{eq27}\sqrt{yz}\sqrt{\frac{PB}{PC}}+\sqrt{yz}\sqrt{\frac{PC}{PB}}+xu^3\ge 3\sqrt[3]{\sqrt{yz}\sqrt{\frac{PB}{PC}}\cdot\sqrt{yz}\sqrt{\frac{PC}{PB}}\cdot xu^3}=3u,\end{equation}
	and similarly,
	\begin{equation}\label{eq28}\sqrt{zx}\sqrt{\frac{PC}{PA}}+\sqrt{zx}\sqrt{\frac{PA}{PC}}+y\ge  3v,\end{equation}
	\begin{equation}\label{eq29}\sqrt{xy}\sqrt{\frac{PA}{PB}}+\sqrt{xy}\sqrt{\frac{PB}{PA}}+z\ge  3w.\end{equation}
	Combining \eqref{eq26}, \eqref{eq27}, \eqref{eq28}, and \eqref{eq29}, we have
	\begin{equation}\label{eq30}x(PA+u^3l_a)+y(PB+v^3l_b)+z(PC+w^3l_c)\ge 3(ul_a+vl_b+wl_c),\end{equation}
	with equality holds if and only if
	\begin{equation}\label{eq30a}x=y=z=u=v=w=1,\end{equation}
	triangle $ABC$ is equilateral and $P$ is its center.
	This completes the proof of Theorem \ref{thm1}.
\end{proof}
\begin{remark}Similarly, we have two particular cases as new weighted versions of Barrow's inequality. In Theorem \ref{thm1}, let $u=v=w=1$ then
	\begin{equation}x(PA+l_a)+y(PB+l_b)+z(PC+l_c)\ge 3(l_a+l_b+l_c),
	\end{equation}
	let $x=y=z=1$ then
	\begin{equation}PA+PB+PC\ge (3u-u^3)l_a+(3v-v^3)l_b+(3w-w^3)l_c.
	\end{equation}
\end{remark}

\section{Some applications} In this section, we apply Theorem \ref{thm0}, Theorem \ref{thma0}, and Theorem \ref{thm1} to get some strengthened versions of Erd\H{o}s-Mordell inequalities and its variations.

\begin{theorem}\label{thm2}Let $u$, $v$, and $w$ be positive real numbers such that $uvw=1$. Let $P$ be an interior point of the triangle $ABC$. Denote by $d_a$, $d_b$, and $d_c$ the distances of $P$ from sides $AB$, $BC$, and $CA$, respectively. Then
	\begin{equation}\label{eq31}(PA+u^3d_a)(PB+v^3d_b)(PC+w^3d_c)\ge (ud_a+vd_b+wd_c)^3.\end{equation}
	Equality holds if and only if $u=v=w=1$, triangle $ABC$ is equilateral and $P$ is its center.
\end{theorem}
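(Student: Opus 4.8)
The plan is to deduce the product inequality \eqref{eq31} directly from Theorem \ref{thm0} by choosing the still-free weights $x$, $y$, $z$ cleverly. Abbreviate $A_1 = PA+u^3d_a$, $B_1 = PB+v^3d_b$, $C_1 = PC+w^3d_c$ and $S = ud_a+vd_b+wd_c$; since $P$ is interior all four quantities are positive, and the assertion to be proved is simply $A_1B_1C_1\ge S^3$.

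The key idea is to pick $x$, $y$, $z$ so that the three summands on the left of \eqref{eq0} become equal, while still respecting $xyz=1$. Concretely, I would set
\[ x = \frac{(A_1B_1C_1)^{1/3}}{A_1},\qquad y = \frac{(A_1B_1C_1)^{1/3}}{B_1},\qquad z = \frac{(A_1B_1C_1)^{1/3}}{C_1}. \]
These are positive real numbers with $xyz = (A_1B_1C_1)/(A_1B_1C_1) = 1$, so, together with the standing hypothesis $uvw=1$, all hypotheses of Theorem \ref{thm0} are satisfied. For this choice one has $xA_1 = yB_1 = zC_1 = (A_1B_1C_1)^{1/3}$, hence the left-hand side of \eqref{eq0} equals $3(A_1B_1C_1)^{1/3}$. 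Theorem \ref{thm0} then gives $3(A_1B_1C_1)^{1/3}\ge 3S$, and cubing yields \eqref{eq31}.

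For the equality characterization, note that with this particular $x$, $y$, $z$ the inequality \eqref{eq31} is an equality precisely when the invoked instance of \eqref{eq0} is an equality. By the equality clause of Theorem \ref{thm0} this forces $x=y=z=u=v=w=1$ together with $ABC$ equilateral and $P$ its center; conversely, a direct check shows that under these conditions both sides of \eqref{eq31} coincide, so equality holds exactly when $u=v=w=1$, $ABC$ is equilateral and $P$ is its center.

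I do not expect a genuine analytic obstacle in this argument: the entire content is the normalization trick that converts the additive Theorem \ref{thm0} into a multiplicative statement, a standard "reverse AM--GM via rescaling" device. The only point deserving care is verifying that the chosen weights indeed satisfy $xyz=1$ (they do, by construction) and that the equality analysis is not distorted by the substitution; since passing from $(x,y,z)$ of the prescribed form to the extremal configuration of \eqref{eq0} matches exactly the extremal configuration of \eqref{eq31}, no cases are lost or gained.
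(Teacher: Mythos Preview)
Your proposal is correct and is essentially identical to the paper's own proof: the paper also chooses $x=\sqrt[3]{A_1B_1C_1}/A_1$, $y=\sqrt[3]{A_1B_1C_1}/B_1$, $z=\sqrt[3]{A_1B_1C_1}/C_1$, applies Theorem~\ref{thm0} to obtain $3\sqrt[3]{A_1B_1C_1}\ge 3S$, and cubes. Your write-up is in fact slightly tidier in handling the equality case.
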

\begin{proof}From Theorem \ref{thm0}, let \begin{equation}\label{eq31aa}x=\frac{\sqrt[3]{(PA+u^3d_a)(PB+v^3d_b)(PC+2^3d_c)}}{PA+u^3d_a},\end{equation} 
	\begin{equation}\label{eq31bb}y=\frac{\sqrt[3]{(PA+u^3d_a)(PB+v^3d_b)(PC+w^3d_c)}}{PB+v^3d_b},\end{equation}
	and
	\begin{equation}\label{eq31cc}z=\frac{\sqrt[3]{(PA+u^3d_a)(PB+v^3d_b)(PC+w^3d_c)}}{PC+w^3d_c}.\end{equation} 
	It is easily seen that $x$, $y$, and $z$ are positive numbers satisfied the condition $xyz=1$, we get
	\begin{equation}\label{eq31b}3\sqrt[3]{(PA+u^3d_a)(PB+v^3d_b)(PC+w^3d_c)}\ge 3 (ud_a+vd_b+wd_c)\end{equation}
	or
	\begin{equation}\label{eq31c}(PA+u^3d_a)(PB+v^3d_b)(PC+w^3d_c)\ge (ud_a+vd_b+wd_c)^3.\end{equation}
	Equality holds if and only if $u=v=w=1$, triangle $ABC$ is equilateral and $P$ is its center. This completes the proof of Theorem \ref{thm2}.
\end{proof}
\begin{remark}From Theorem \ref{thm2}, let $u=v=w=1$, we get a strengthened version of Erd\H{o}s-Mordell inequality
	\begin{equation}\label{eq31d}(PA+d_a)(PB+d_b)(PC+d_c)\ge (d_a+d_b+d_c)^3\end{equation}
	because
	\begin{multline}(d_a+d_b+d_c)^3\le (PA+d_a)(PB+d_b)(PC+d_c)\le\\ \le\left(\frac{PA+d_a+PB+d_b+PC+d_c}{3}\right)^3.\end{multline}
	which is equivalent to
	\begin{equation}3(d_a+d_b+d_c)\le PA+d_a+PB+d_b+PC+d_c\end{equation}
	or
	\begin{equation}2(d_a+d_b+d_c)\le PA+PB+PC.\end{equation}
	Notice that the inequality \eqref{eq31d} has appeared in \cite{19} but with a different proof. Theorem \ref{thm2} is a generalization of inequality \eqref{eq31d}.
\end{remark}

Likewise, we have the similar application of Theorem \ref{thma0} and obtain a strengthened version of Dao-Nguyen-Pham's inequality.

\begin{theorem}\label{thm8}Let $u$, $v$, and $w$ be positive real numbers such that $uvw=1$. Let $P$ be an interior point of the triangle $ABC$ with circumcircle $(\omega)$. Denote by $R_A$, $R_B$, and $R_C$ the distances of $P$ from the tangents to $(\omega)$ at $A$, $B$, $C$ respectively, $d_a$, $d_b$, and $d_c$ the distances of $P$ from the sides $AB$, $BC$, and $CA$, respectively. Then
	\begin{equation}(R_A+u^3d_a)(R_B+v^3d_b)(R_C+w^3d_c)\ge (ud_a+vd_b+wd_c)^3.
	\end{equation}
	Equality holds if and only if $u=v=w=1$, $P$ coincides one of the vertices of the triangle $ABC$.
\end{theorem}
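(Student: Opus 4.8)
The plan is to run the proof of Theorem~\ref{thm2} essentially verbatim, with Theorem~\ref{thma0} in place of Theorem~\ref{thm0}. Since $P$ is an interior point, $d_a,d_b,d_c>0$, so $R_A+u^3d_a\ge u^3d_a>0$ and likewise $R_B+v^3d_b>0$, $R_C+w^3d_c>0$. Writing $\Pi:=\sqrt[3]{(R_A+u^3d_a)(R_B+v^3d_b)(R_C+w^3d_c)}$ and setting
\[
x=\frac{\Pi}{R_A+u^3d_a},\qquad y=\frac{\Pi}{R_B+v^3d_b},\qquad z=\frac{\Pi}{R_C+w^3d_c},
\]
we obtain positive reals with $xyz=1$ for which $x(R_A+u^3d_a)=y(R_B+v^3d_b)=z(R_C+w^3d_c)=\Pi$. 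Feeding this triple, together with the given $u,v,w$ (still with $uvw=1$), into \eqref{eqa0} collapses its left side to $3\Pi$, so $3\Pi\ge 3(ud_a+vd_b+wd_c)$; cubing gives the asserted inequality. (Equivalently: among positive triples with product $1$, the form $x_1t_1+x_2t_2+x_3t_3$ has minimum $3\sqrt[3]{t_1t_2t_3}$; take the $t_i$ to be the three parenthesised quantities and bound that minimum below by $3(ud_a+vd_b+wd_c)$ using Theorem~\ref{thma0}.) The inequality is thus immediate; all the content sits in the equality clause.

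The equality analysis is the step I expect to need care, and here I would depart from the clause as printed. Equality in the product inequality forces equality in \eqref{eqa0} for exactly the triple $x,y,z$ above, hence equality in each of the three AM--GM steps \eqref{eq8a}, \eqref{eq11a}, \eqref{eq12a}. The key point is that \eqref{eq3a} is an \emph{identity}, not an inequality (unlike its Erd\H{o}s--Mordell counterpart \eqref{eq3}), so --- in contrast with Theorem~\ref{thm2} --- nothing forces $P$ to be the circumcentre; substituting $x,y,z$ into those three equality conditions reduces them to $a^2(R_A+u^3d_a)=b^2(R_B+v^3d_b)=c^2(R_C+w^3d_c)$ together with $u^3=\dfrac{bc}{a^2}$, $v^3=\dfrac{ca}{b^2}$, $w^3=\dfrac{ab}{c^2}$, i.e.\ $ua=vb=wc$.

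It then remains to see that $ua=vb=wc$ is also sufficient. Using the identities \eqref{eq3a}, \eqref{eq5a}, \eqref{eq6a} one checks, for \emph{every} interior $P$, that
\[
a^2(R_A+u^3d_a)=b^2(R_B+v^3d_b)=c^2(R_C+w^3d_c)=bc\,d_a+ca\,d_b+ab\,d_c
\]
as soon as $ua=vb=wc$; and since $\dfrac{bc}{(abc)^{2/3}}=u$, $\dfrac{ca}{(abc)^{2/3}}=v$, $\dfrac{ab}{(abc)^{2/3}}=w$, the product $(R_A+u^3d_a)(R_B+v^3d_b)(R_C+w^3d_c)$ then equals $\dfrac{(bc\,d_a+ca\,d_b+ab\,d_c)^3}{(abc)^2}=(ud_a+vd_b+wd_c)^3$. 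So I would record the equality condition as: $ua=vb=wc$ (equivalently $u:v:w=\frac1a:\frac1b:\frac1c$), with $P$ \emph{any} interior point; in particular $u=v=w=1$ happens exactly when the triangle is equilateral, and then equality holds for every interior $P$ (for instance at the centroid), not only at the vertices. Getting this equality clause right is, to my mind, the only genuinely delicate point in the theorem.
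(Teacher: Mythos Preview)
Your derivation of the inequality mirrors exactly the paper's (sketched) argument: the paper does not write out a proof of Theorem~\ref{thm8} but simply says it follows from Theorem~\ref{thma0} ``likewise'' (that is, by the same substitution used to pass from Theorem~\ref{thm0} to Theorem~\ref{thm2}), and you carry this out verbatim.

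Where you genuinely add content is the equality analysis, and your correction is right while the paper's stated clause is not. Because \eqref{eq3a} is an \emph{identity}, equality in \eqref{eqa0} for your particular $(x,y,z)$ is governed solely by the three AM--GM conditions \eqref{eq8a}, \eqref{eq11a}, \eqref{eq12a}; as you compute, these reduce to $u^{3}=bc/a^{2}$, $v^{3}=ca/b^{2}$, $w^{3}=ab/c^{2}$, i.e.\ $ua=vb=wc$, with no constraint whatsoever on the interior point $P$. Your verification that this condition is also sufficient (each of $a^{2}(R_A+u^{3}d_a)$, $b^{2}(R_B+v^{3}d_b)$, $c^{2}(R_C+w^{3}d_c)$ then equals $bc\,d_a+ca\,d_b+ab\,d_c$) is correct. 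The paper's clause ``$u=v=w=1$, $P$ coincides one of the vertices'' is internally inconsistent with the hypothesis that $P$ is interior; and when $u=v=w=1$ one needs $a=b=c$, after which equality holds for \emph{every} interior $P$, not merely at a distinguished point. So your departure from the printed equality statement is justified.
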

From Theorem \ref{thm8}, let $u=v=w=1$, we get a strengthened version of Dao-Nguyen-Pham's inequality also it is strengthened version of inequality \eqref{eq31d}
\begin{equation}\label{eq31e}(R_A+d_a)(R_B+d_b)(R_C+d_c)\ge (d_a+d_b+d_c)^3.\end{equation}
With the same idea, we get the following theorem as a particular case of Theorem \ref{thm1}
\begin{theorem}\label{thm3}Let $u$, $v$, and $w$ be positive real numbers such that $uvw=1$. Let $P$ be an interior point of the triangle $ABC$. Denote by $l_a$, $l_b$, and $l_c$ the lengths of the bisectors of $\angle BPC$, $\angle CPA$, and $\angle APB$, respectively. Then
	\begin{equation}\label{eq32}(PA+u^3l_a)(PB+v^3l_b)(PC+w^3l_c)\ge (ul_a+vl_b+wl_c)^3.\end{equation}
	Equality holds if and only if $u=v=w=1$, triangle $ABC$ is equilateral and $P$ is its center.
\end{theorem}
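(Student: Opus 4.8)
The plan is to deduce Theorem~\ref{thm3} from Theorem~\ref{thm1} by the same normalisation trick used to pass from Theorem~\ref{thm0} to Theorem~\ref{thm2}: choose $x,y,z$ so that the three grouped quantities $x(PA+u^3l_a)$, $y(PB+v^3l_b)$, $z(PC+w^3l_c)$ all become equal, which turns the left-hand side of \eqref{eq1} into three times the cube root of the product appearing in \eqref{eq32}.

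Concretely, I would write $\Pi=(PA+u^3l_a)(PB+v^3l_b)(PC+w^3l_c)$ and set
\[
x=\frac{\sqrt[3]{\Pi}}{PA+u^3l_a},\qquad y=\frac{\sqrt[3]{\Pi}}{PB+v^3l_b},\qquad z=\frac{\sqrt[3]{\Pi}}{PC+w^3l_c}.
\]
Since $P$ is interior to $ABC$, the quantities $PA,PB,PC,l_a,l_b,l_c$ are all strictly positive (for $l_a$ this follows from \eqref{eq19} because $\angle BPC<\pi$), and $u,v,w>0$; hence $x,y,z$ are well-defined positive reals with $xyz=\Pi/\Pi=1$, so $(x,y,z)$ and $(u,v,w)$ meet the hypotheses of Theorem~\ref{thm1}. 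For this choice one has $x(PA+u^3l_a)=y(PB+v^3l_b)=z(PC+w^3l_c)=\sqrt[3]{\Pi}$, so the left-hand side of \eqref{eq1} equals $3\sqrt[3]{\Pi}$. Applying Theorem~\ref{thm1} gives $3\sqrt[3]{\Pi}\ge 3(ul_a+vl_b+wl_c)$, and since both sides are nonnegative, cubing yields \eqref{eq32}.

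It remains to settle the equality case. Because $t\mapsto t^{3}$ is strictly increasing, equality in \eqref{eq32} is equivalent to equality in the instance of \eqref{eq1} just invoked, and by Theorem~\ref{thm1} that forces $x=y=z=u=v=w=1$ together with $ABC$ equilateral and $P$ its center. Under the substitution above, $x=y=z=1$ is precisely the statement $PA+u^3l_a=PB+v^3l_b=PC+w^3l_c$, which is automatic once $u=v=w=1$ and $ABC$ is equilateral with $P$ at the center (then $PA=PB=PC$ and $l_a=l_b=l_c$), so the equality condition reduces to exactly the one stated. I do not anticipate a genuine obstacle: all the analytic content is already packaged into Theorem~\ref{thm1} (hence ultimately into Wolstenholme's inequality), and the only points needing a little care are checking that the substituted $x,y,z$ are admissible and observing that the extra condition they impose in the equality analysis is redundant.
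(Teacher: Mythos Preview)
Your proposal is correct and matches the paper's own approach: the paper states that Theorem~\ref{thm3} is obtained ``with the same idea'' from Theorem~\ref{thm1}, i.e., by exactly the substitution $x=\sqrt[3]{\Pi}/(PA+u^3l_a)$, etc., that was spelled out in the proof of Theorem~\ref{thm2}. Your handling of positivity and of the equality case is also fine, since the extra constraint $x=y=z=1$ is indeed redundant once $u=v=w=1$ and $ABC$ is equilateral with $P$ its center.
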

From Theorem \ref{thm3}, let $u=v=w=1$, we obtain a strengthened version of Barrow's inequality
\begin{equation}\label{eq31f}(PA+l_a)(PB+l_b)(PC+l_c)\ge (l_a+l_b+l_c)^3.\end{equation}

Notice that we also have a strengthened version of the weighted Barrow's inequality from \eqref{eq25} by rewriting it as follows
\begin{theorem}[Another strengthened version of weighted Barrow's inequality]\label{thm4}Let $x$, $y$, and $z$ be any three positive real numbers. Let $P$ be an interior point of the triangle $ABC$. Denote by $l_a$, $l_b$, and $l_c$ be the lengths of the bisectors of $\angle BPC$, $\angle CPA$, and $\angle APB$, respectively. Then
	\begin{multline}\label{eq33}xPA+yPB+zPC\ge \sqrt{yz}\left(\sqrt{\frac{PB}{PC}}+\sqrt{\frac{PC}{PB}}\right)l_a+\\+\sqrt{zx}\left(\sqrt{\frac{PC}{PA}}+\sqrt{\frac{PA}{PC}}\right)l_b+\sqrt{xy}\left(\sqrt{\frac{PA}{PB}}+\sqrt{\frac{PB}{PA}}\right)l_c.
	\end{multline}
	Equality holds if and only if $x=y=z$, triangle $ABC$ is equilateral and $P$ is its center.
\end{theorem}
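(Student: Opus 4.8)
The plan is to recognize that inequality \eqref{eq33} is nothing but inequality \eqref{eq25}, which was already derived inside the proof of Theorem \ref{thm1}, and that the derivation never used the normalization $xyz=1$. Concretely: with $\alpha=\angle BPC$, $\beta=\angle CPA$, $\gamma=\angle APB$ (so that $\tfrac{\alpha}{2}+\tfrac{\beta}{2}+\tfrac{\gamma}{2}=\pi$ since $P$ is interior), one starts from the identity \eqref{eq20}, which rewrites the angle-bisector length formula $l_a=\dfrac{2\,PB\cdot PC\cos\frac{\alpha}{2}}{PB+PC}$, multiplies the three such identities by $\sqrt{yz}$, $\sqrt{zx}$, $\sqrt{xy}$ to obtain \eqref{eq21}--\eqref{eq23}, and then feeds $p=\sqrt{xPA}$, $q=\sqrt{yPB}$, $r=\sqrt{zPC}$ together with the angles $\tfrac{\alpha}{2},\tfrac{\beta}{2},\tfrac{\gamma}{2}$ into Wolstenholme's inequality to get \eqref{eq24}. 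Substituting \eqref{eq21}--\eqref{eq23} into \eqref{eq24} yields exactly \eqref{eq33}. Since no step imposes any constraint on $x,y,z$, the inequality holds for all positive reals, which establishes the displayed inequality.

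For the equality clause, note that Wolstenholme's inequality is the only inequality invoked, so \eqref{eq33} is an equality precisely when \eqref{eq24} is. Using the square decomposition
\[
p^2+q^2+r^2-2qr\cos\theta_1-2rp\cos\theta_2-2pq\cos\theta_3=(p\cos\theta_2+q\cos\theta_1-r)^2+(p\sin\theta_2-q\sin\theta_1)^2,
\]
valid whenever $\theta_1+\theta_2+\theta_3=\pi$, equality in Wolstenholme forces $(p,q,r)$ to be proportional to $(\sin\theta_1,\sin\theta_2,\sin\theta_3)$; in our situation this reads $\sqrt{xPA}:\sqrt{yPB}:\sqrt{zPC}=\sin\frac{\alpha}{2}:\sin\frac{\beta}{2}:\sin\frac{\gamma}{2}$. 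When $ABC$ is equilateral and $P$ is its center one has $PA=PB=PC$ and $\alpha=\beta=\gamma=\frac{2\pi}{3}$, so this collapses to $x=y=z$, and conversely $x=y=z$ in that symmetric configuration plainly realizes equality, recovering the stated equality case.

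The step I expect to need the most care is the equality characterization. The computation above shows that, for \emph{any} triangle $ABC$ and \emph{any} interior point $P$, equality in \eqref{eq33} is also attained by the (in general non-symmetric) choice $x=\sin^2\frac{\alpha}{2}/PA$, $y=\sin^2\frac{\beta}{2}/PB$, $z=\sin^2\frac{\gamma}{2}/PC$; so the genuine equality locus is the proportionality $\sqrt{xPA}:\sqrt{yPB}:\sqrt{zPC}=\sin\frac{\alpha}{2}:\sin\frac{\beta}{2}:\sin\frac{\gamma}{2}$, and the configuration recorded in the statement should be understood as the distinguished instance in which $P$ is prescribed independently of the weights. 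I would therefore state the equality clause in terms of that proportionality (or as a sufficient condition only), rather than as a literal ``if and only if''.
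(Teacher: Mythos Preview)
Your approach is exactly the paper's: the paper does not give a separate proof of this theorem but simply remarks that it is obtained ``from \eqref{eq25} by rewriting it,'' and you have correctly observed that the derivation of \eqref{eq25} inside the proof of Theorem~\ref{thm1} nowhere uses the normalization $xyz=1$, so it holds for arbitrary positive weights. Your additional analysis of the equality case is sharper than the paper's: the paper asserts the ``if and only if'' clause without justification, while you correctly identify via the Wolstenholme equality condition that the true equality locus is the proportionality $\sqrt{xPA}:\sqrt{yPB}:\sqrt{zPC}=\sin\tfrac{\alpha}{2}:\sin\tfrac{\beta}{2}:\sin\tfrac{\gamma}{2}$, which is strictly larger than the stated symmetric case.
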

We notice that, a form without weights of Theorem \ref{thm4} has appeared in \cite{16a}. Using Theorem \ref{thm4}, we get another strengthened versions of Barrow's inequality as follows
\begin{theorem}[Another strengthened version of Barrow's inequality]\label{thm5}Let $P$ be an interior point of the triangle $ABC$. Denote by $l_a$, $l_b$, and $l_c$ the lengths of the bisectors of $\angle BPC$, $\angle CPA$, and $\angle APB$, respectively. Then
	\begin{align}&\frac{PA+PB+PC}{2}\\  &\ge\frac{PA^2(PB+PC)^2+PB^2(PC+PA)^2+PC^2(PA+PB)^2}{(PB+PC)(PC+PA)(PA+PB)}\label{eq34a}\\
	&\ge l_a+l_b+l_c.\label{eq34b}
	\end{align}
	Equality at \eqref{eq34a} holds if and only if $PA=PB=PC$; equality at \eqref{eq34b} holds if and only if triangle $ABC$ is equilateral and $P$ is its center.
\end{theorem}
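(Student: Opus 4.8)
The plan is to prove the two inequalities \eqref{eq34a} and \eqref{eq34b} separately. Throughout write $p=PA$, $q=PB$, $r=PC$, and let $M$ denote the common middle expression $\dfrac{p^2(q+r)^2+q^2(r+p)^2+r^2(p+q)^2}{(p+q)(q+r)(r+p)}$; cancelling one factor $(q+r)$ (resp.\ $(r+p)$, $(p+q)$) in each summand rewrites this as $M=\sum_{\mathrm{cyc}}\dfrac{p^2(q+r)}{(p+q)(p+r)}$. The right inequality \eqref{eq34b} will follow from Theorem \ref{thm4} applied to a carefully chosen weight triple, while the left inequality \eqref{eq34a} is a symmetric homogeneous algebraic inequality that I will reduce to a Schur-type sum.

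For \eqref{eq34b}: in Theorem \ref{thm4} the coefficient of $l_a$ on the right-hand side is $\sqrt{yz}\bigl(\sqrt{q/r}+\sqrt{r/q}\,\bigr)=\sqrt{yz}\cdot\dfrac{q+r}{\sqrt{qr}}$, and similarly for $l_b$, $l_c$. I would take
$$x=\frac{p(q+r)}{(p+q)(p+r)},\qquad y=\frac{q(r+p)}{(q+r)(q+p)},\qquad z=\frac{r(p+q)}{(r+p)(r+q)},$$
which are positive, so Theorem \ref{thm4} applies with no normalization. A short computation gives $yz=\dfrac{qr}{(q+r)^2}$, hence $\sqrt{yz}\cdot\dfrac{q+r}{\sqrt{qr}}=1$, and likewise the coefficients of $l_b$ and $l_c$ equal $1$; meanwhile the left-hand side $xp+yq+zr$ is exactly $\sum_{\mathrm{cyc}}\dfrac{p^2(q+r)}{(p+q)(p+r)}=M$. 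Thus Theorem \ref{thm4} yields $M\ge l_a+l_b+l_c$ at once.

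For \eqref{eq34a}: I would use the elementary identity $\dfrac{p(q+r)}{(p+q)(p+r)}=\dfrac12-\dfrac{(p-q)(p-r)}{2(p+q)(p+r)}$ (obtained by combining over $(p+q)(p+r)$ and noting the numerator is $2p(q+r)$), which gives $M=\dfrac{p+q+r}{2}-\dfrac12\sum_{\mathrm{cyc}}\dfrac{p(p-q)(p-r)}{(p+q)(p+r)}$. Hence \eqref{eq34a} is equivalent to the Schur-type inequality $\sum_{\mathrm{cyc}}\dfrac{p(p-q)(p-r)}{(p+q)(p+r)}\ge 0$. I would prove this assuming WLOG $p\ge q\ge r>0$: the $r$-term is nonnegative since $(r-p)(r-q)\ge 0$; for the other two terms factor out $(p-q)\ge 0$ to reduce to $\dfrac{p(p-r)}{p+r}\ge\dfrac{q(q-r)}{q+r}$, which holds because $t\mapsto\dfrac{t(t-r)}{t+r}$ is increasing on $[r,\infty)$ (its derivative has numerator $t^2+2tr-r^2>0$ there) and $p\ge q\ge r$. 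Summing the three terms finishes it.

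Finally, tracking equality: in \eqref{eq34a} the Schur-type sum vanishes (for interior $P$) exactly when $p=q=r$, i.e.\ $PA=PB=PC$; in \eqref{eq34b} equality forces, by the equality clause of Theorem \ref{thm4}, that the chosen $x,y,z$ are equal and $ABC$ is equilateral with $P$ its center, which amounts precisely to ``$ABC$ equilateral, $P$ its center''. The only genuinely non-mechanical step is spotting the partial-fraction identity that converts \eqref{eq34a} into the Schur-type sum, together with reverse-engineering the weights for Theorem \ref{thm4}; once these are in hand the rest is routine verification.
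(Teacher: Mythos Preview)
Your proof is correct. For \eqref{eq34b} you apply Theorem~\ref{thm4} with weights $x=\dfrac{p(q+r)}{(p+q)(p+r)}$, etc., while the paper takes $x=\bigl(\sqrt{q/r}+\sqrt{r/q}\bigr)^2=\dfrac{(q+r)^2}{qr}$, etc.; these two triples differ only by the common scalar $\dfrac{pqr}{(p+q)(q+r)(r+p)}$, so by the homogeneity of Theorem~\ref{thm4} in $(x,y,z)$ the substitutions are equivalent and this part is essentially the paper's argument. The genuine difference is in \eqref{eq34a}: the paper clears denominators and exhibits the explicit sum-of-squares identity
\[
(p+q+r)(p+q)(q+r)(r+p)-2\sum_{\mathrm{cyc}} p^2(q+r)^2=\sum_{\mathrm{cyc}} pq(p-q)^2\ge 0,
\]
whereas you use the partial-fraction identity $\dfrac{p(q+r)}{(p+q)(p+r)}=\dfrac12-\dfrac{(p-q)(p-r)}{2(p+q)(p+r)}$ to recast the question as the Schur-type inequality $\sum_{\mathrm{cyc}}\dfrac{p(p-q)(p-r)}{(p+q)(p+r)}\ge0$ and then finish with an ordering-and-monotonicity argument. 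The paper's SOS identity makes the equality case $p=q=r$ immediate and needs no case split, but has to be discovered (or verified by brute expansion); your route avoids hunting for an identity at the cost of a short WLOG analysis. Both are standard symmetric-inequality techniques and either is fully adequate here.
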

\begin{proof}Let $PA=p$, $PB=q$, and $PC=r$ then the first inequality comes from the following algebraic identity and inequality
	\begin{multline}\label{eq34c}(p+q+r)(p+q)(q+r)(r+p)-2p^2(q+r)^2-2q^2(r+p)^2-\\-2r^2(p+q)^2
	=pq(p-q)^2+qr(q-r)^2+rp(r-p)^2\ge 0.
	\end{multline}
	Equality at \eqref{eq34c} holds if and only if $p=q=r$ or $PA=PB=PC$. 
	
	For the second inequality, from Theorem \ref{thm4}, let
	\begin{equation}x=\left(\sqrt{\frac{PB}{PC}}+\sqrt{\frac{PC}{PB}}\right)^2,\ y=\left(\sqrt{\frac{PC}{PA}}+\sqrt{\frac{PA}{PC}}\right)^2,\end{equation}
	and
	\begin{equation}z=\left(\sqrt{\frac{PA}{PB}}+\sqrt{\frac{PB}{PA}}\right)^2.\end{equation}
	We obtain
	\begin{multline}\label{eq35}\frac{(PB+PC)^2}{PB\cdot PC}PA+\frac{(PC+PA)^2}{PC\cdot PA}PB+\frac{(PA+PB)^2}{PA\cdot PB}PC\ge\\ \left(\sqrt{\frac{PB}{PC}}+\sqrt{\frac{PC}{PB}}\right)\left(\sqrt{\frac{PC}{PA}}+\sqrt{\frac{PA}{PC}}\right)\left(\sqrt{\frac{PA}{PB}}+\sqrt{\frac{PB}{PA}}\right)\left(l_a+l_b+l_c\right),
	\end{multline}
	which is equivalent to
	\begin{equation}\label{eq36}\frac{PA^2(PB+PC)^2+PB^2(PC+PA)^2+PC^2(PA+PB)^2}{(PB+PC)(PC+PA)(PA+PB)}\ge l_a+l_b+l_c.
	\end{equation}
	Equality holds if and only if triangle $ABC$ is equilateral and $P$ is its center. This completes the proof of Theorem \ref{thm5}.
\end{proof}

Finally, we note that Theorem \ref{thm0} can be considered as generalization of "weighted Erd\H{o}s-Mordell inequality" which is given by  Dar and Gueron \cite{3} by setting $x=\frac{1}{u^2}$, $y=\frac{1}{v^2}$, and $z=\frac{1}{w^2}$ ($xyz=1$ since $uvw=1$), we obtain

\begin{theorem}[An equivalent form of weighted Erd\H{o}s-Mordell inequality in \cite{3}]\label{thm7}Let $u$, $v$, and $w$ be positive real numbers such that $uvw=1$. Let $P$ be an interior point of the triangle $ABC$. Denote by $d_a$, $d_b$, and $d_c$ the distances of $P$ from the sides $AB$, $BC$, and $CA$, respectively. Then
	\begin{equation}\frac{PA}{u^2}+\frac{PB}{v^2}+\frac{PC}{w^2}\ge 2(ud_a+vd_b+wd_c).
	\end{equation}
	Equality holds if and only if $u=v=w=1$, triangle $ABC$ is equilateral and $P$ is its center.
\end{theorem}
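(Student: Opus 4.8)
The plan is to obtain this inequality as an immediate specialization of Theorem~\ref{thm0}, with no new geometric input required. In Theorem~\ref{thm0} the weights $x,y,z$ may be any positive reals with $xyz=1$, so I would take
\begin{equation*}x=\frac{1}{u^2},\qquad y=\frac{1}{v^2},\qquad z=\frac{1}{w^2},\end{equation*}
keeping $u,v,w$ as they are. Both normalizations required by Theorem~\ref{thm0} then hold: $uvw=1$ is the standing hypothesis here, and consequently $xyz=\frac{1}{(uvw)^2}=1$.

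Next I would expand the left-hand side of \eqref{eq0} under this choice of weights. The first term becomes
\begin{equation*}x(PA+u^3d_a)=\frac{PA}{u^2}+u\,d_a,\end{equation*}
and similarly $y(PB+v^3d_b)=\frac{PB}{v^2}+v\,d_b$ and $z(PC+w^3d_c)=\frac{PC}{w^2}+w\,d_c$. Hence \eqref{eq0} becomes
\begin{equation*}\frac{PA}{u^2}+\frac{PB}{v^2}+\frac{PC}{w^2}+(u\,d_a+v\,d_b+w\,d_c)\ge 3(u\,d_a+v\,d_b+w\,d_c),\end{equation*}
and subtracting $u\,d_a+v\,d_b+w\,d_c$ from both sides yields exactly $\frac{PA}{u^2}+\frac{PB}{v^2}+\frac{PC}{w^2}\ge 2(u\,d_a+v\,d_b+w\,d_c)$.

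For the equality case I would quote the equality characterization of Theorem~\ref{thm0}: equality in \eqref{eq0} holds precisely when $x=y=z=u=v=w=1$, triangle $ABC$ is equilateral, and $P$ is its center. Under the substitution above, $u=v=w=1$ already forces $x=y=z=1$, so the condition collapses to $u=v=w=1$, $ABC$ equilateral, $P$ its center, as stated. I expect no real obstacle in this argument, since all the work is carried by Theorem~\ref{thm0}; the only points to watch are that the substitution preserves the normalization $xyz=1$ and that the cancellation of $u\,d_a+v\,d_b+w\,d_c$ leaves the correct coefficient $2$ on the right-hand side.
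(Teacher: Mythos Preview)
Your proposal is correct and follows exactly the approach the paper uses: the paper derives Theorem~\ref{thm7} from Theorem~\ref{thm0} by the very substitution $x=\tfrac{1}{u^2}$, $y=\tfrac{1}{v^2}$, $z=\tfrac{1}{w^2}$, noting $xyz=(uvw)^{-2}=1$. Your write-up is actually more explicit than the paper's in carrying out the cancellation and the equality case.
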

\section{Conclusion}
In this paper, we give a new weighted version of Erd\H{o}s-Mordell inequality, and also new weighted versions for Erd\H{o}s-Mordell type inequalities with proofs and some applications as the strengthened version of Erd\H{o}s-Mordell inequality and Erd\H{o}s-Mordell type inequalities.

\end{document}